\def\mapright#1#2#3{\smash{\mathop{\hbox to
#3{\rightarrowfill}}\limits^{#1}_{#2}}}
\def\mapleft#1#2#3{\smash{\mathop{\hbox to
#3{\leftarrowfill}}\limits^{#1}_{#2}}}
\def\mapright#1#2{\smash{\mathop{\hbox to 0.90cm{\rightarrowfill}}\limits^{#1}_{#2}}}
\def\mapleft#1#2{\smash{\mathop{\hbox to 0.90cm{\leftarrowfill}}\limits^{#1}_{#2}}}
\def\mapleftright#1#2{\smash{\mathop{\hbox to 0.80cm{\leftarrowfill \rightarrowfill}}\limits^{#1}_{#2}}}
\title{A subtle new invariant for framed oriented knots and links} 
\author{Sóstenes Lins}
\date
{\small This work is dedicated to Louis Kauffman for his great
contributions to combinatorial knot theory.}
\begin{document}

\maketitle

\begin{abstract}

We produce a facial state sum on plane diagrams of a knot or a link which admits
an invariant specialization under Polyak's recent set of generating of 4
Reidemeister moves. Thus an isotopy invariant of framed links is obtained.
Each state is a complete coloring of the faces of the diagram
into white and black faces so that no two black faces share an edge. Each
state induces a monomial in a ring of 16 variables. The sum of the states,
properly specialized defines the new invariant. In despite of its simplicity
it complements Jones invariant in distinguishing mirror pairs of links. 
In particular it proves that $9_{42}$ is distinct from its mirror image.
For this pair of knots both the Jones Polynomial and Kauffman 2-variable polynomial
fail.

\end{abstract}

\section{Introduction}

This work was inspired by the combinatorial methods to study links pioneered by Kauffman in 
\cite{kauffman1987state,kauffman1990invariant}. 
Recently Polyak (\cite{polyak2009minimal,polyak2010minimal} has proven in a lucid paper that the 
4 moves below are enough to
obtain all 16 oriented forms of Reidemeister moves. There are 4 moves of type 1
(1a,1b,1c,1d), 4 of type 2 (2a,2b,2c,2d) and 8 of type 3 (3a,3b, 3c, 3d, 3e, 3f, 3g, 3h). 
\vspace{0.5cm}
\begin{figure}[!ht]
\begin{center}
\includegraphics[width=15cm]{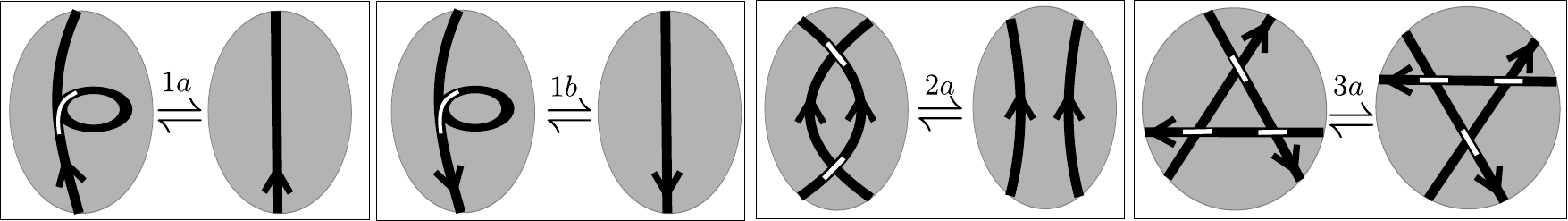} \\
\caption{\sf Polyak's set of 4 generating Reidemeister moves}
\label{fig:generating}
\end{center}
\end{figure}

This result is handy in the sense that to check for invariance we need only to care about
a substantially smaller subset of moves. In our setup not only the 2 above moves of type 1,
but the 4 of them are obtained for free. Therefore we are left only with moves 2a and 3a to
deal with. The approach yields a new invariant for framed links because the {\em ribbon move}
(see chapter 12, section 1 of \cite{kauffman1994tlr}), is satisfied. This invariant is simple
to define, subtle and fairly strong. It is particularly good at distinguishing pairs of links
not distinguished by the Jones invariant. The invariant is easier to compute when compared
with Jones' since it has substantially less number of states. Also it admits recursiveness and
full parallelism in its computation contrary to the computation of the Jones' polynomial. 
The issue of effectively
computing link invariants is important in various areas of applied science. A typical example
of application to computational molecular biology is \cite{emmert2006algorithmic}. In these applied areas,
it might be important to compute good invariants
for links with more crossings than attainable by current invariants.

For checking the performance of the new invariant as compared with the Jones invariant
we have produced the following experiment in the 2978 (including the unknot) knots up to 12 crossings:
compute both invariants in each mirror pair of knots and report the discrepancies.
Out of the 2978 pairs only 21 behave differently.
Each discrepancy means that either the pair is distinguished by Jones and not by the new
invariant (2 $\times$ 1 to Jones), or else $2 \times 1$ for the new invariant.
The total sum of scores on the 21 discrepancies 
is 30 $\times$ 12 in favor of the new invariant.
Its first victory is precisely $9_{42}$.

I am indebted to the Centro de Inform\'atica, UFPE/Recife, Brazil  
for financial support. I am also supported by a research 
grant from CNPq/Brazil, Proc. 301233/2009-8. I want to thank Peter Johnson,
from DMAT-UFPE for helpful conversations about the theme of this work and
for calling my attention to Polyak's paper, \cite{polyak2009minimal}.

\section{The states, the variables and the state sum}
By a {\em framed link} we mean the union of 
$n$ disjoint  copies of  $[\-\epsilon,\epsilon] \times \mathbb{S}^1$ embedded into $\mathbb{R}^3$. 
A knot is also a link (case $n=1$). A framed link is conveniently described by its special projections
into a plane $\mathbb{R}^2$. We make sure that the crossings of the projection when $\epsilon \rightarrow 0$
are transversals and there are no triple points. The image of the projection becomes a 4-regular
finite plane graph $\mathcal{D}$, where each vertex is a crossing. At each crossing we need to know which 
one of the two strands is up. In our figures we let the projection be thick black lines so that
at each crossing a thinner white segment indicates the strand which is above. We impose a further
restriction on our projections: the image $\mathcal{D}$ must be a {\bf connected} graph. This implies that each
{\em face}, that is a connected component of the difference $\mathbb{R}^2 \backslash \mathcal{D}$,
is either an open disk or the complement of a disk (case of the external face).
A final restriction is that the projection has to have at least one crossing. Therefore
to compute the invariant of the unknot as a circle, we have to replace the circle, say, by the unknot
with two crossings. A projection
satisfying all the listed restrictions above is named an {\em adequate projection}. It is fairly easy
to modify, by isotopies and Reidemeister moves of type 2, an arbitrary projection free of triple points and
tangencies to obtain an adequate projection. This process is named a {\em preparation} 
for the projection. It is not difficult to prove that any two preparations of the same link have the 
same evaluation under our scheme. This is proved in an adequate generality in a forthcoming joint paper with
P. Johnson, \cite{joli2012A}. 

Our links have all their components oriented and so at each crossing there is a 
distinguished angle formed by two incoming segments. For the crossing $X$ let the 4 angles
be encompassed by faces $F_0^\chi, F_1^\chi, F_2^\chi, F_3^\chi$, where   $F_0^X$ is the face encompassing
the distinguished angle and the other proceed in a counterclockwise fashion. These 4 faces need
not to be distinct. Given a link diagram $\mathcal{D}$, let $\mathcal{F}$ be the set of faces
of $\mathcal{D}$. A {\em state} of $\mathcal{D}$ is a function $\sigma: \mathcal{F} \longrightarrow \{0,1\}$
with the restriction that two faces sharing an edge of $\mathcal{D}$ cannot both have 1 as their
images under $\sigma$. In the diagrams a state is represented by its image displayed as black and
white spots inside the faces they represent. Given a state $\sigma$ and a crossing $X$, 
let $\sigma_\chi$ denote the sequence of 4 bits which are the 
image of the faces $F_0^\chi, F_1^\chi, F_2^\chi, F_3^\chi$ under $\sigma$.
Denote by $|\sigma_\chi|$ the decimal value of $\sigma_\chi$. A state induces a set of variables
at each crossing $\chi$: either $x_{|\sigma_\chi|}$, if $\chi$ is a negative crossing or else
$X_{|\sigma_\chi|}$, if it is positive. There are 14 possible crossing variables. They are:
$x_0, X_0, x_1, X_1, x_2, X_2, x_4, X_4, x_5, X_5, x_8, X_8, x_{10}, X_{10}$. The state also induces
a set of variables at each face: a white spot corresponds to $f_0$ and a black spot corresponds to $f_1$.
The face variables $f_0$ and $f_1$ are added to the set of 14 crossing variables defining the
16 variables of our state sum model. The {\em monomial} induced by a 
state is the product of the variables at the crossings together
with the product of the variables at the faces. The {\em state sum} of $\mathcal{D}$, denoted 
$\left[\mathcal{D}\right]$,  is the sum of all the monomials induced by all the states. 

\begin{theorem}
Consider the following set of 25 polynomial equations in the 16 variables.

\begin{center}
\begin{tabular}{|l|r|}
\hline
${f_0}^3 {x_0} {X_0}+{f_0}^2 {f_1} {X_2} {x_8}={f_0}, $ &
${f_0}^2 {f_1} {x_0} {X_8}+{f_0} {f_1}^2  {x_8} {X_{10}}=0$\\
${f_0}^3 {x_1} {X_1}={f_0}$ &
${f_0}^2 {f_1} {X_0} {x_2}+{f_0} {f_1}^2  {X_2} {x_{10}}=0$\\
${f_0} {f_1}^2 {x_2} {X_8}+{f_1}^3 {x_{10}} {X_{10}}-{f_1}$&
${f_0}^3 {x_4} {X_4}={f_0}$\\
${f_0}^3 {x_5} {X_5}={f_0}$ & ------------ \\ \hline \hline

$f_0x_0X_0^2+f_1 x_1 X_4^2 = f_0 x_0 X_0^2+f_1 x_1 X_1^2$&
${f_0} {x_0} {X_0} {X_1} + {f_1} {x_1} {X_4} {X_5}={f_0} {X_0} {X_2} {x_8}$\\
${f_0} {X_0} {X_2} {x_8}={f_0} {x_0} {X_0} {X_4}+{f_1} {x_1} {X_1} {X_5}$&
${f_0} {X_0}^2 {x_1}+{f_1} {X_4}^2 {x_5}={f_0} {x_0} {X_2} {X_8},$\\
${f_0} {X_0} {x_1} {X_1}+{f_1} {X_4} {x_5} {X_5}={f_0} {X_2}  {x_8} {X_{10}}$&
${f_0} {X_0} {x_2} {X_8}={f_0} {x_0} {X_0} {X_4}+{f_1} {x_1} {X_1} {X_5}$\\
$f_0 X_1 x_ 2 X_8 = f_0 X_2 X_4 x_8$&
$f_0 X_2 X_8 x_{10} =f_0 x_0 X_4^2 + f_1 x_1 X_5^2$\\
$ f_0 x_0 X_0 X_ 1 + f_1 x_4 X_4 X_5 = f_0 X_0 x_2 X_8 $&
$ f_0 x_0 X_1 ^2 + f_1 x_4 X_5^2 = f_0 X_2 X_8 x_{10}$\\
$f_0 X_1 X_2 x_8 = f_0 x_2 X_4 X_8$&
$ f_0 X_0 x_1 X_1 + f_1 X_4 x_5 X_5 = f_0 x_2 X_8 X_{10}$\\
$f_0 x_1 X_1^2 + f_1 x_5 X_5 ^2 = f_0 x_{10} X_{10}^2$&
$f_0 x_0 X_2 X_8 = f_0 X_0^2 x_4 + f_1 X_1^2 x_5$\\
$ f_0 X_2 x_8 X_{10} = f_0 X_0 x_4 X_4 + f_1 X_1 x_5 X_5$&
$ f_0 x_1 X_2 X_8 = f_0 X_2 x_4 X_8 $\\
$ f_0 x_2 X_8 X_{10}=f_0 X_0 x_4 X_4 + f_1 X_1 x_5 X_5$&
$f_0 x_{10} X_{10}^2 = f_0 x_4 X_4^2 + f_1 x_5 X_5^2$\\ \hline
\end{tabular} 
\end{center}
If the 16 variables are specialized so that these equations are satisfied, then
the state sum $\left[\mathcal{D}\right]$ is invariant under moves 2a and 3a.
\label{theo:invars23}
\end{theorem}

\begin{proof}
The proof is given in Figs. \ref{fig:move2a} and \ref{fig:move3a}. The equations are
easily obtained once each possible color configuration of the spots in the boundary faces is fixed.
\end{proof}

\vspace{0.5cm}
\begin{figure}[!ht]
\begin{center}
\includegraphics[width=16cm]{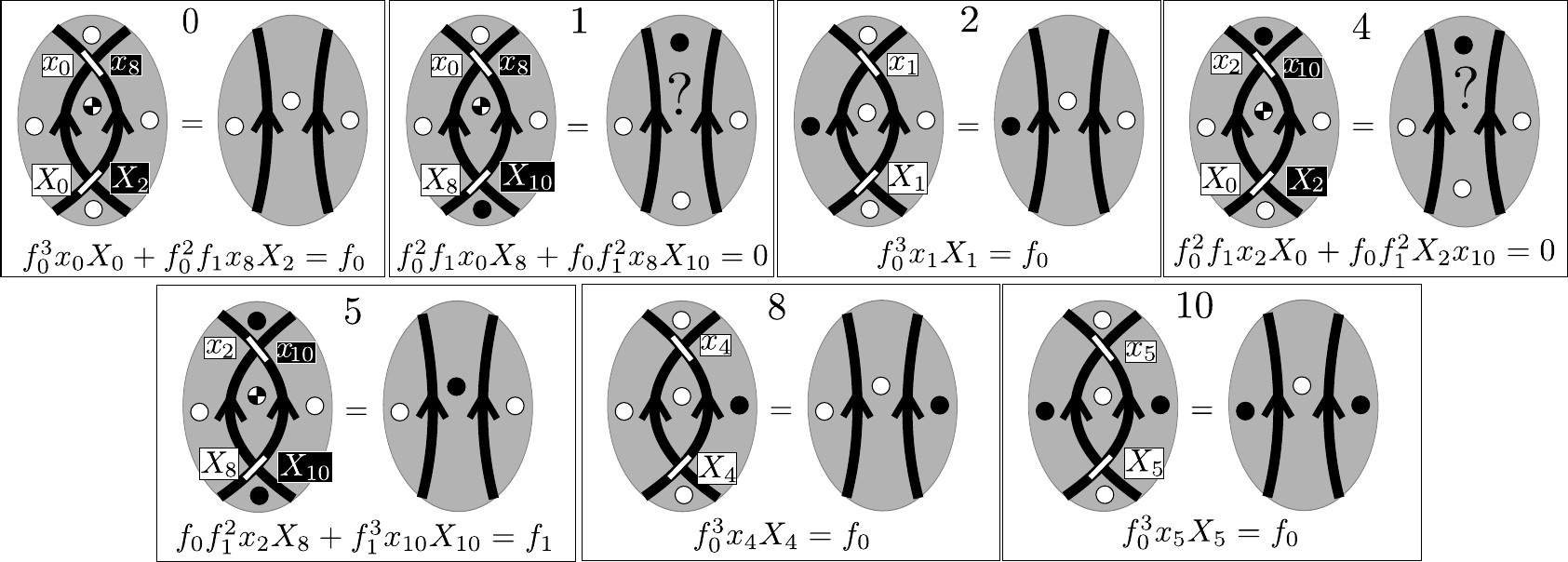} \\
\caption{\sf Forcing invariance under a restricted form of Polyak's Reidemeister move 2a: 
this move is non-local with respect to the facial
state sum employed. Indeed it only applies in the case that the top 
and bottom faces (incident to the central bivalent face in the lefthand side diagrams) are distinct. 
Therefore three distinct faces coalesce into one in the righthand side diagrams. 
With this care we have complete 
information about how many faces are involved to adjust the polynomial equations accordingly. Variables
$f_0$ and $f_1$ are supposed to be invertible and so we do not need to put in the equations 
the face variables corresponding
to the left and right faces, which are maintained in both sides of each pair of diagrams 
and so cancell each other. When the top and
bottom faces of a left diagram receive distinct marks, then the corresponding right side do not exist,
and so must be evaluated as zero (boundary cases 1 and 4). 
Note that of the 16 possible boundary configurations
of black/white spots only the above 7 have no adjacent black spots. }
\label{fig:move2a}
\end{center}
\end{figure}

\vspace{0.5cm}
\begin{figure}[!ht]
\begin{center}
\includegraphics[width=17cm]{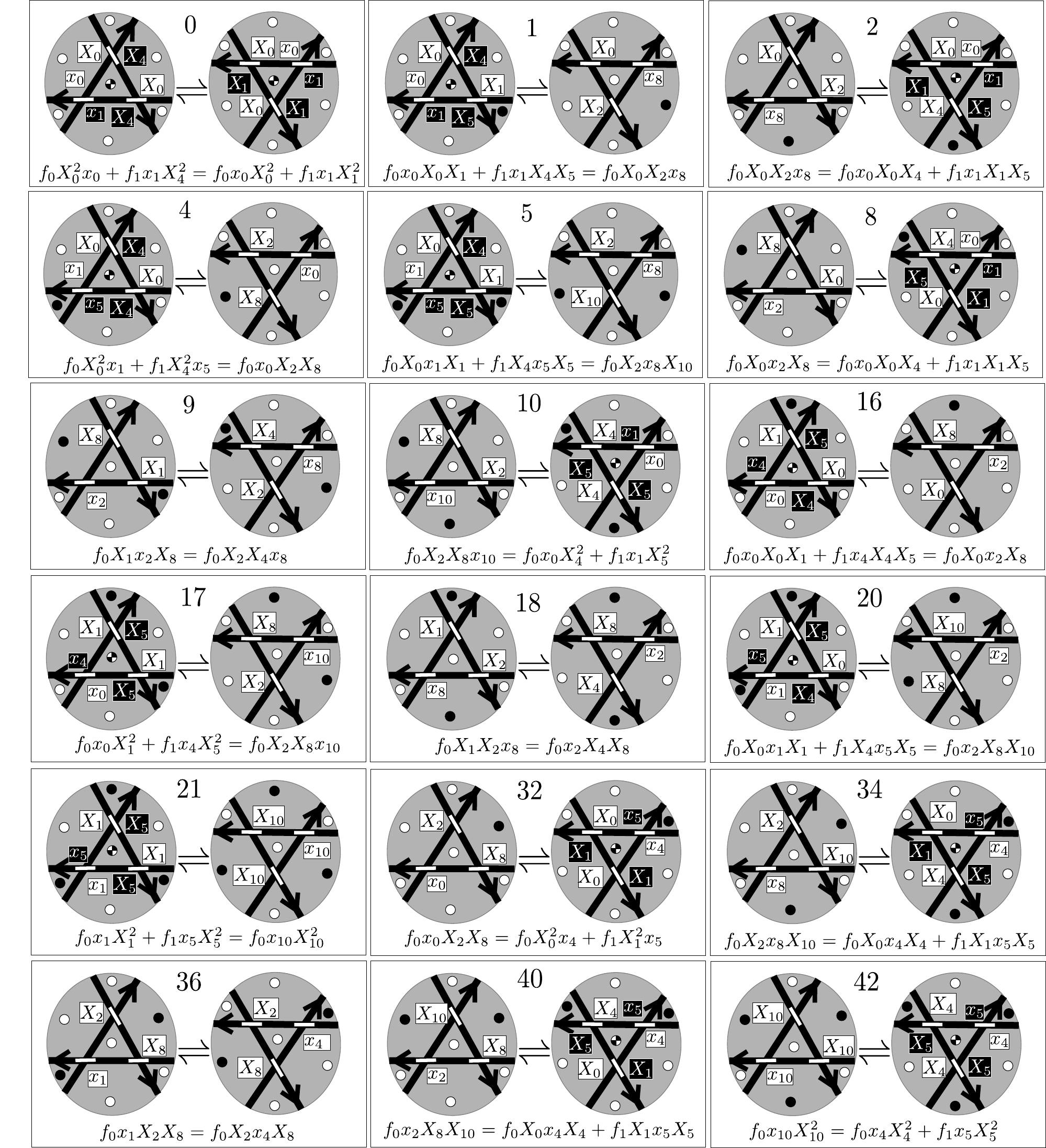} \\
\caption{\sf Forcing invariance under Polyak's Reidemeister move 3a: in this case
the move is entirely local. Equations implying invariance are obtained with no aditional
constraints. Out of the 64 boundary configurations for this move, only 18 have no adjacent
pair of black spots. These are displayed above and each one contributes with an equation which
must be satisfied for the invariance.}
\label{fig:move3a}
\end{center}
\end{figure}

\noindent
For the basics about rings, ideals and Groebner basis see \cite{cox2007ideals}.
If $p \in \mathcal{R}$ and $\mathcal{I}$ is a fixed ideal of $\mathcal{R}$ , 
let $\widehat{p} := p+\mathcal{I} \in  \mathcal{R}/ \mathcal{I}$. In this
quotient ring $\widehat{p}=\widehat{q} \Leftrightarrow p-q \in \mathcal{I}$ and
$\widehat{p\, q}=\widehat{p}\, \widehat{q}$.

\begin{proposition}
Consider the ring $\mathcal{R}=\langle s, S \rangle$. Define
$p_1= 1 + s + s^2 + s^3 + s^4$, 
$p_2= 1 + S + S^2 + S^3 + S^4$, 
$p_3 = s^4 + s^3 S + s^2 S^2 + s S^3 + S^4$, 
Let $\mathcal{I}$ be the ideal $\mathcal{I}=\langle p1, p2, p3 \rangle.$ 
The following assignment 
\begin{center}
$
\begin{array}{cccc}
f_0 := \widehat{s}^2\widehat{S}^2 & 
f_1 := \widehat{s}^4+\widehat{S}^4 &
x_0 := \widehat{s}+\widehat{S} & 
X_0 = \widehat{s}+ \widehat{S} \\
x_1 := \widehat{S} & X_1 := \widehat{s} & 
x_{10} := -\widehat{s}^4 \widehat{S}^2 & 
X_{10} := -\widehat{s}^2 \widehat{S}^4 \\
x_2 := \widehat{s} & X_2 :=  \widehat{S} & x_4 := \widehat{S} & X_4 := \widehat{s} \\
x_5 := -\widehat{s}^2 \widehat{S}^4 & X_5 := -\widehat{s}^4 \widehat{S}^2 & 
x_8 := \widehat{s} & X_8 := \widehat{S}\\
\end{array}
$
\end{center}
satisfies all the 25 equations for the invariance of 2a and 3a
in the quotient ring $\mathcal{R}/\mathcal{I}$.
\label{prop:assig}
\end{proposition}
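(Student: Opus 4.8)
The plan is to verify Proposition \ref{prop:assig} by direct substitution of the sixteen assigned values into each of the 25 equations from Theorem \ref{theo:invars23}, and then reduce the resulting polynomial identity modulo the ideal $\mathcal{I}=\langle p_1,p_2,p_3\rangle$. Since the assignment expresses every variable as a monomial (or a short sum) in $\widehat{s}$ and $\widehat{S}$, the left- and right-hand sides of each equation become explicit Laurent-free polynomials in $s$ and $S$ once I clear the hats. The task then decomposes into 25 independent checks of the form $\text{LHS}-\text{RHS}\in\mathcal{I}$, i.e.\ each difference must lie in the ideal generated by $p_1$, $p_2$, and $p_3$.

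\medskip

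First I would substitute into the four equations of the top-left block of Theorem \ref{theo:invars23}, since these involve only $f_0,f_1$ and the diagonal crossing variables and are the simplest; for instance $f_0^3 x_1 X_1 = f_0$ becomes $\widehat{s}^6\widehat{S}^6\cdot\widehat{S}\,\widehat{s}=\widehat{s}^2\widehat{S}^2$, so I must show $s^7 S^7 - s^2 S^2\in\mathcal{I}$. Here the structural roles of the three generators become clear: $p_1$ and $p_2$ are the cyclotomic-type relations forcing $s^5\equiv -(1+s+s^2+s^3+s^4)+\cdots$, effectively imposing $s^5\equiv 1$--like collapse once combined, while $p_3$ is the mixed relation tying powers of $s$ to powers of $S$. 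My working method would be to pick a monomial order and compute a Gr\"obner basis of $\mathcal{I}$ (following \cite{cox2007ideals}), then reduce each difference $\text{LHS}-\text{RHS}$ to its normal form and confirm it is $0$. In practice I expect that $p_1$ and $p_2$ let me replace $s^4$ by $-(1+s+s^2+s^3)$ and likewise for $S$, bringing every expression into the finite-dimensional quotient spanned by $\{s^iS^j: 0\le i,j\le 3\}$, after which $p_3$ supplies the remaining linear dependencies among these sixteen basis monomials.

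\medskip

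I would then proceed through the six zero/equality equations in the top-right and lower blocks, grouping them by which crossing variables appear so that shared subcomputations are reused; the equations coming from move 3a (the lower two-thirds of the table) are numerous but structurally parallel, each being a sum of two products on each side, so a single reduction routine applied uniformly handles them. Throughout, the hypothesis that $f_0=\widehat{s}^2\widehat{S}^2$ is invertible in $\mathcal{R}/\mathcal{I}$ is used implicitly whenever the table's derivation cancelled a common face variable; I should confirm invertibility, which follows because $\widehat{s}$ and $\widehat{S}$ are units in the quotient (their inverses are expressible via $p_1,p_2$, since $s(1+s+s^2+s^3)\equiv -1$ up to the relation).

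\medskip

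The main obstacle I anticipate is not conceptual but bookkeeping: each of the 25 reductions must be carried out without error, and the presence of the negative monomials in $x_5,X_5,x_{10},X_{10}$ means sign management is delicate, particularly in the equations where a single product on one side must match a sum of two products with a cancellation on the other (e.g.\ $f_0 x_{10}X_{10}^2=f_0 x_4 X_4^2+f_1 x_5 X_5^2$). I expect the crux to be verifying precisely these \emph{difference-of-two-terms} equations, where the identity $\text{LHS}-\text{RHS}\in\mathcal{I}$ genuinely requires $p_3$ and is not a formal monomial collapse; establishing that $p_3$ (and not merely $p_1,p_2$) is exactly the relation needed, and that the three generators together cut out the quotient in which all 25 simultaneously vanish, is the substantive content of the proposition.
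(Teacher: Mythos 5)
Your proposal is correct and matches the paper's approach: the paper's entire proof is the phrase ``Routine check,'' and your plan --- substitute the sixteen assignments into each of the 25 equations, then verify that each difference of sides lies in $\mathcal{I}$ by normal-form reduction (using that $s^5\equiv 1$, $S^5\equiv 1$ follow from $p_1,p_2$, with $p_3$ supplying the mixed $s$--$S$ relation) --- is exactly what that routine check consists of. One small corrective to your anticipation, not a gap: $p_3$ is already needed in the very first equation $f_0^3x_0X_0+f_0^2f_1X_2x_8=f_0$, whose left side reduces to $s^2S^2+p_3$, while some of the two-term equations you flagged as the crux collapse using $p_1,p_2$ alone.
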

\begin{proof}
 Routine check.
\end{proof}

The above assignment of variables in the ring $\mathcal{R}$ was first obtained
from Groebner basis computations with thousands of generators including all 12 
forms of the moves 2 and 3 in a much more general experiment which included beyond orientation
checkerboard shading and no 1-1 restriction on adjacent faces. This condition was naturally suggested
by the algebra. By imposing it an adequate small GB is obtained. The results show that shading
is irrelevant. By not imposing the 1-1 restriction produces a stronger invariant. 
But the algebra is messy and further research needs to be done to clarify the situation. 

To check the invariance of framed links relative to  
our assignment in Proposition \ref{prop:assig} is enough, by Polyak's result, 
to take care of moves 1 and to check the 25 equations. Polyak's paper (\cite{polyak2010minimal})
permits a proof of invariance which does not need computers and is
fully presented here.

\section{Dealing with Reidemeister moves of type 1}

The 4 forms of move 1 
are dealt with by an easy compensation
involving the total writhe of $\mathcal{D}$ in the spirit of Kauffman's original bracket, 
\cite{kauffman1987state}.  The proof of their invariance depends on our specific bracket evaluation
on $\mathcal{R}/\mathcal{I}$.

\begin{lemma}
Assume we are working in a quotient ring satifying
$$
f_0 x_0 + f_1 x_4 = f_0 x_{10} = f_0 x_1 + f_1 x_5, \hspace{4mm}
f_0 X_0 + f_1 X_4 = f_0 X_{10} = f_0 X_1 + f_1 X_5.
$$
That is the case of our case $\mathcal{R}/\mathcal{I}$.
Then, the cancellation of the 4 types of curls have the following multiplicative
effects on the evaluation of a diagram:
\begin{center}
\includegraphics[width=17cm]{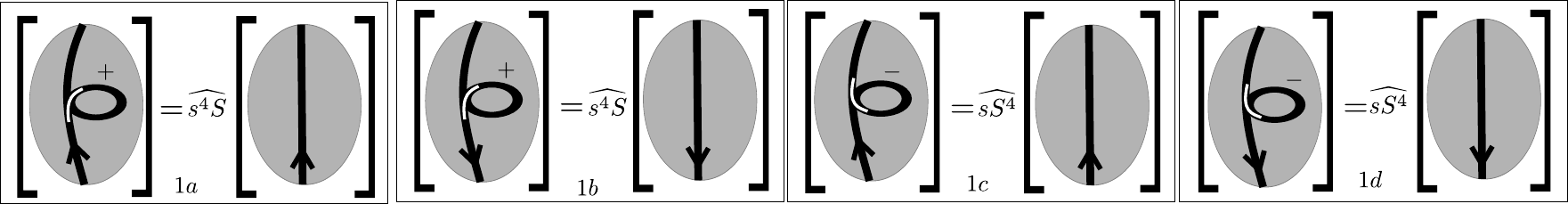} \\
\label{lem:moves1}
\end{center}
\end{lemma}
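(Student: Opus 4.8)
The plan is to analyze each of the four curl-creation moves (1a, 1b, 1c, 1d) as a purely local computation on the affected crossing, exactly in the style used for moves 2a and 3a. When a curl is introduced, a single crossing $\chi$ is created together with one new bounded face nestled inside the loop; the key observation is that all other faces of the diagram are unaffected, so the state sum factors into a sum over states of the rest of the diagram times a local contribution summed over the two admissible colorings (white or black) of the new inner face. First I would fix the type of curl (which determines whether the new crossing is positive or negative, and which of the four angular positions $F_0^\chi,\dots,F_3^\chi$ the new inner face occupies), and then enumerate the admissible states of the three faces meeting at $\chi$, discarding any configuration that would place two black spots on edge-adjacent faces.

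Next I would write down, for each curl type, the sum of the induced crossing monomials over the two choices of spot (white/black) on the new inner face, weighted by the appropriate face variable $f_0$ or $f_1$. Because the outer two faces surrounding the loop survive the move and carry the same variables before and after, they factor out and cancel; what remains is a local factor that must equal a \emph{scalar multiple} of the corresponding contribution in the curl-free diagram. This is precisely where the hypothesized relations enter: the two chains of equalities
\[
f_0 x_0 + f_1 x_4 = f_0 x_{10} = f_0 x_1 + f_1 x_5,
\qquad
f_0 X_0 + f_1 X_4 = f_0 X_{10} = f_0 X_1 + f_1 X_5
\]
are exactly the statements that the negative-curl and positive-curl local factors collapse to a common value, independent of how the surrounding diagram forces the colors at the two retained faces. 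I would read off from the figure \texttt{moves1.pdf} which of these sums appears for each of the four curl types, and verify that each reduces, via the displayed identities, to multiplication of $[\mathcal{D}]$ by a single ring element (the curl factor recorded in the figure).

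The main obstacle will be bookkeeping rather than algebra: I must be careful that the local factorization is genuinely valid, i.e. that introducing the curl does not merge or split any face other than creating the one new inner face, and that no admissible global state is lost or double-counted when I restrict attention to the local picture. In particular, the restriction that adjacent faces cannot both be black must be checked against the two retained surrounding faces, not just the new one, since a forced black spot outside the loop could forbid a black spot on the inner face and thereby alter which terms of the local sum actually occur. Once this locality is established cleanly, the lemma follows by substituting the specialization of Proposition~\ref{prop:assig} to compute the four explicit multiplicative curl factors displayed in the figure; since that specialization satisfies the two displayed chains of relations (a direct consequence of the identities among $\widehat{s},\widehat{S}$ modulo $\mathcal{I}$), the invariance up to these writhe-dependent scalars is immediate, and the subsequent writhe compensation absorbs them to yield a genuine framed-link invariant.
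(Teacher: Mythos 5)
Your proposal is correct and follows essentially the same route as the paper: a local analysis of the curl crossing, enumeration of the admissible boundary colorings (three per curl, since adjacency can forbid a black inner spot), and the observation that the hypothesized identities $f_0 x_0 + f_1 x_4 = f_0 x_{10} = f_0 x_1 + f_1 x_5$ (and the capitalized version) say precisely that the local factor is independent of the boundary colors, after which the specialization in $\mathcal{R}/\mathcal{I}$ gives the explicit factors $\widehat{s}^4\widehat{S}$ and $\widehat{s}\widehat{S}^4$. The only difference is that you make explicit the locality/factorization bookkeeping that the paper leaves implicit in its figures.
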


\begin{proof}
Consider the 3 possibilities for the boundary of move 1a.
\begin{center}
\includegraphics[width=13cm]{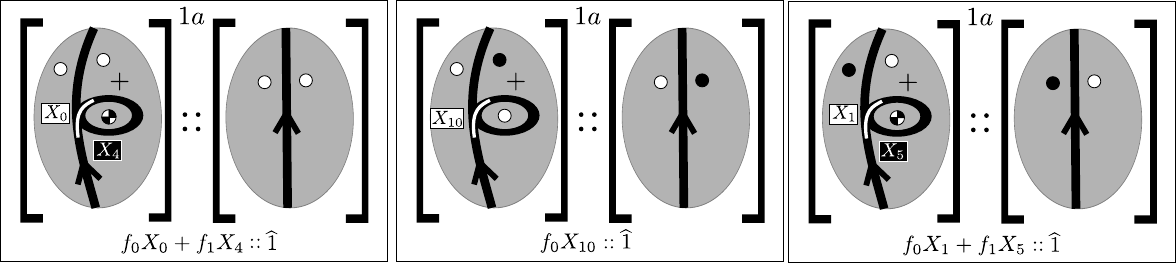}\ .
\end{center}
The proof for this move follows from the fact that in $\mathcal R / \mathcal{I}$
we have $f_0X_0+f_1X_4 = f_0 X_{10} = 
f_0 X_1 + f_1 X_5 = \widehat{s}^4 \widehat{S}$. Thus, the factor is constant,
independent of the colors of the spots in the boundary face. The proof for move 1b is exactly
the same, since only the sign and not the winding number of the curl is relevant for
our evaluation. The proof of 1c and 1d are similar. The three boundary configurations 
for the case 1c appear below. Case 1d has exactly the same proof as 1c.
\begin{center}
\includegraphics[width=13cm]{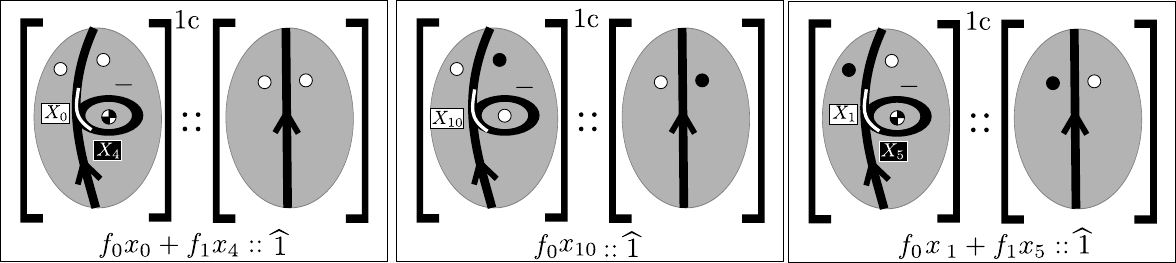}\ .
\end{center}
This time we have in $\mathcal R / \mathcal{I}$:  
$f_0x_0+f_1x_4 = f_0 x_{10} = f_0 x_1 + f_1 x_5 = \widehat{s}\widehat{S}^4$.
\end{proof}

\begin{definition}
Given a link diagram $\mathcal{D}$, let wr($\mathcal{D}$) denote the total writhe of $\mathcal{D}$.
Then define $ \lceil \mathcal{D} \rceil := 
(\widehat{s} \widehat{S}^4)^{wr(\mathcal{D})} \left[\mathcal{D}\right]$,
if $wr(\mathcal{D})\ge 0$ and
$ \lceil \mathcal{D} \rceil := 
(\widehat{s}^4 \widehat{S})^{wr(\mathcal{D})} \left[\mathcal{D}\right]$, 
if $wr(\mathcal{D})<0$.
\end{definition}

\begin{theorem}
 The state sum $\lceil \mathcal{D} \rceil$ is invariant under the 6 generating Reidemeister moves
1a, 1b, 1c, 1d, 2a 3a. Therefore it is an invariant of ambient isotopy for links.
\end{theorem}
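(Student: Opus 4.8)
The plan is to reduce the claim to the six moves $1a,1b,1c,1d,2a,3a$ and to treat the type-$2$/type-$3$ part and the type-$1$ part by two completely different mechanisms. Since this list contains Polyak's generating set of four moves, once invariance of $\lceil\mathcal{D}\rceil$ under each of the six is established, Polyak's theorem propagates it to all sixteen oriented Reidemeister moves, and ambient isotopy invariance of $\lceil\mathcal{D}\rceil$ follows immediately. So the entire content is to check the six generators, and I would organize this around a single algebraic fact: in $\mathcal{R}/\mathcal{I}$ the relations $p_1,p_2\in\mathcal{I}$ force $\widehat{s}^5=\widehat{S}^5=1$, whence $(\widehat{s}\widehat{S}^4)(\widehat{s}^4\widehat{S})=\widehat{s}^5\widehat{S}^5=1$. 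Thus the two normalization factors occurring in the definition of $\lceil\mathcal{D}\rceil$ are mutually inverse, which is exactly what a writhe correction needs in order to compensate a type-$1$ move.

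First I would dispose of $2a$ and $3a$. Both are writhe-preserving: a type-$2$ move inserts or deletes a pair of crossings of opposite sign, and a type-$3$ move preserves both the number and the signs of the crossings, so in either case $wr(\mathcal{D})$ is unchanged. Consequently the scalar prefactor in the definition of $\lceil\mathcal{D}\rceil$ is literally the same on the two sides of the move, and invariance of $\lceil\mathcal{D}\rceil$ collapses to invariance of the bare state sum $\left[\mathcal{D}\right]$. The latter is precisely Theorem~\ref{theo:invars23} together with Proposition~\ref{prop:assig}: the assignment of the sixteen variables in $\mathcal{R}/\mathcal{I}$ satisfies the $25$ equations, so $\left[\mathcal{D}\right]$ is fixed by $2a$ and $3a$. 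This step is bookkeeping and I expect no difficulty.

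The substance is in the four type-$1$ moves, where I would invoke Lemma~\ref{lem:moves1}. A curl of $1a$ or $1b$ carries a positive crossing and, by the lemma, multiplies the bare evaluation by $\widehat{s}^4\widehat{S}$, while a curl of $1c$ or $1d$ carries a negative crossing and multiplies it by $\widehat{s}\widehat{S}^4$; correspondingly the move shifts $wr(\mathcal{D})$ by $+1$ or $-1$. I would then run the verification one move at a time: inserting a curl changes the normalization prefactor, and that change must cancel the curl factor supplied by the lemma, the identity $(\widehat{s}\widehat{S}^4)(\widehat{s}^4\widehat{S})=1$ being exactly the cancellation that leaves $\lceil\mathcal{D}\rceil$ unchanged. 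The delicate bookkeeping lives at the seam of the two-case definition, namely where the move carries $wr(\mathcal{D})$ across $0$ (say from $wr=-1$ to $wr=0$, or from $0$ to $-1$): there the prefactor switches from one paired factor to the other, and I must confirm that the two cases glue consistently rather than leaving a spurious factor such as $(\widehat{s}\widehat{S}^4)^{2}$. This boundary check, together with keeping the sign convention for the crossing introduced by each curl straight against the exponents $\widehat{s}\widehat{S}^4$ versus $\widehat{s}^4\widehat{S}$, is the one place where I expect to have to be genuinely careful; everything away from the seam is forced by $\widehat{s}^5=\widehat{S}^5=1$.

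With the six generating moves verified, the proof closes: $\lceil\mathcal{D}\rceil$ is unchanged by a generating set for the oriented Reidemeister moves, so by Polyak's result it is unchanged by all sixteen of them, and is therefore an invariant of ambient isotopy for links.
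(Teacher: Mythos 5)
Your proposal is correct and takes essentially the same route as the paper's own (very terse) proof: moves 2a and 3a preserve the writhe, so invariance of $\lceil\mathcal{D}\rceil$ under them reduces to Theorem~\ref{theo:invars23} with the specialization of Proposition~\ref{prop:assig}, while the four type-1 moves are handled by Lemma~\ref{lem:moves1}, the curl factors $\widehat{s}^4\widehat{S}$ and $\widehat{s}\widehat{S}^4$ being cancelled by the writhe normalization via $(\widehat{s}\widehat{S}^4)(\widehat{s}^4\widehat{S})=\widehat{s}^5\widehat{S}^5=1$. Your extra care at the seam where the writhe crosses zero is warranted (the paper glosses over it), and your observation that the two prefactors are mutually inverse units resolves it, since it shows the two-case definition is just the single formula $(\widehat{s}\widehat{S}^4)^{wr(\mathcal{D})}$ with negative powers read as inverses.
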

\begin{proof}
The proof follows from Lemma \ref{lem:moves1}, which takes care of moves of type 1 and from
Theorem \ref{theo:invars23} since moves 2a and 3a do not change the writhe.
\label{theo:fullinvar}
\end{proof}

\begin{proposition}
  The state sum $\lceil \mathcal{D} \rceil$ is invariant under the ribbon move.
So it is an invariant of framed links.
\end{proposition}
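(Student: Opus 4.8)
The plan is to show that the ribbon move, exactly like the four curl moves treated in Lemma \ref{lem:moves1}, acts on the state sum by a single multiplicative scalar, and then to check that in $\mathcal{R}/\mathcal{I}$ this scalar equals $1$. Recall that the ribbon move of Kauffman (chapter 12, section 1 of \cite{kauffman1994tlr}) is the framing-preserving modification which, on blackboard-framed diagrams, inserts (or deletes) a curl of positive type together with a curl of negative type; the two curls carry opposite sign, so the total writhe $wr(\mathcal{D})$ is left unchanged. First I would record this decomposition explicitly, presenting the move as the composition of one curl cancellation drawn from each of the two families handled in Lemma \ref{lem:moves1}.

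Next I would compute the effect on the unnormalized state sum $\left[\mathcal{D}\right]$. By Lemma \ref{lem:moves1} and its proof, cancelling a curl of the first (positive) family multiplies $\left[\mathcal{D}\right]$ by $\widehat{s}^4\widehat{S}$, while cancelling a curl of the second (negative) family multiplies it by $\widehat{s}\widehat{S}^4$. Since the ribbon move combines precisely one curl of each family, its effect on $\left[\mathcal{D}\right]$ is multiplication by the product
\[
(\widehat{s}^4\widehat{S})(\widehat{s}\,\widehat{S}^4)=\widehat{s}^5\widehat{S}^5 .
\]
The key step is the identity $\widehat{s}^5\widehat{S}^5=1$ in $\mathcal{R}/\mathcal{I}$. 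This follows directly from the generators of $\mathcal{I}$: since $(s-1)\,p_1=s^5-1$ and $(S-1)\,p_2=S^5-1$, and since $p_1,p_2\in\mathcal{I}$, we obtain $\widehat{s}^5=\widehat{S}^5=1$, so the product above is $1$. Hence the ribbon move leaves $\left[\mathcal{D}\right]$ unchanged, and because the move preserves the writhe, the normalizing factor in the definition of $\lceil\mathcal{D}\rceil$ is also unaffected; therefore $\lceil\mathcal{D}\rceil$ is invariant under the ribbon move as well.

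Finally I would assemble the conclusion. By Theorem \ref{theo:invars23} the evaluation is already invariant under moves 2a and 3a, which supply regular isotopy; invariance under the ribbon move is exactly the remaining ingredient needed for the evaluation to descend from blackboard-framed diagrams to genuine framed oriented links. Combining these gives that $\lceil\mathcal{D}\rceil$ is a framed-link invariant. The only genuinely delicate point is the first one: correctly identifying the ribbon move as a pairing of one positive-type and one negative-type curl, so that the factors of Lemma \ref{lem:moves1} combine to precisely $\widehat{s}^5\widehat{S}^5$ rather than to some other monomial. Once that bookkeeping is pinned down, the vanishing of the exponents modulo $5$ forced by $p_1$ and $p_2$ makes the scalar trivial, and the remainder of the argument is immediate.
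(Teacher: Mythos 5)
Your central computation is correct, and it is exactly what the paper's one-word proof (``Straightforward'') gestures at: the ribbon move inserts or deletes one positive and one negative curl, so the writhe is unchanged; by Lemma \ref{lem:moves1} the two curls contribute the constant factors $\widehat{s}^4\widehat{S}$ and $\widehat{s}\,\widehat{S}^4$ to the unnormalized sum $\left[\mathcal{D}\right]$, and their product is $\widehat{s}^5\widehat{S}^5=\widehat{1}$ since $s^5-1=(s-1)p_1$ and $S^5-1=(S-1)p_2$ both lie in $\mathcal{I}$. (Whether one reads the Lemma's factors as the effect of inserting or of cancelling a curl is immaterial, since the combined factor is $1$ either way.) Hence $\left[\mathcal{D}\right]$, and therefore also the writhe-normalized $\lceil\mathcal{D}\rceil$, is unchanged by the ribbon move. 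Recording the argument this way has a genuine payoff the paper leaves implicit: it shows the unnormalized bracket itself is ribbon-invariant, which is the version of the statement that actually retains framing information.

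Your closing assembly, however, contains a false step: moves 2a and 3a do not by themselves ``supply regular isotopy.'' Polyak's theorem, as used in this paper, is that the four moves 1a, 1b, 2a, 3a generate all oriented Reidemeister moves, and the derivations of the remaining type~2 and type~3 variants genuinely use the type~1 moves; the pair $\{2a,3a\}$ is not a generating set for the type~2/3 moves, so invariance under all of regular isotopy cannot be inferred from Theorem \ref{theo:invars23} alone. The repair costs nothing: Theorem \ref{theo:fullinvar} already establishes that $\lceil\mathcal{D}\rceil$ is invariant under all Reidemeister moves, hence under every variant of moves 2 and 3; combined with your ribbon computation (or, even more cheaply, with the observation that the ribbon move is itself a composition of two type~1 moves, each of which preserves $\lceil\mathcal{D}\rceil$), invariance under blackboard-framed equivalence follows. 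Indeed, once Theorem \ref{theo:fullinvar} is in hand the proposition as stated is immediate, which is presumably why the paper offers no details; the substantive content of your proof is the first part, not the last.
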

\begin{proof}
 Straightforward.
\end{proof}

Given a total order in the set of monomials of a polynomial ring,
we can {\em reduce a polynomial relative to an ordered set of generators
for an ideal of the ring}. This produces a well determined {\em reduced polynomial}.
If the generating set is a Groebner basis, then the reduced polynomial
is canonical: that is, if $\widehat{p} = \widehat{q}$ and $p, q$ are
reduced polynomials relative to an ordered Groebner basis, then $p=q$
in the original ring $\mathcal{R}$. See a proof in \cite{cox2007ideals}. 
For our purposes, this is the crucial property of a Groebner basis. 
One such basis with two polynomials relative to the lexicographic
monomial ordering and variable ordering $(s,S)$ for our ideal $\mathcal{I}$ is
$$ \mathcal{B}=( S^4+S^3+S^2+S+1,s^3+s^2 S+s^2+s S^2+s S+s+S^3+S^2+S+1),
\hspace{3mm} \mathcal{I} = \langle \mathcal{B} \rangle.$$
Unfortunately the canonical forms of the polynomials associated to $\mathcal{B}$
lose the nice pentagonal symmetry inherent in the algebra. This loss of symmetry
makes impossible to display easily $ \lceil \mathcal{D^\star}\rceil$ 
in terms of  $ \lceil \mathcal{D}\rceil$ where  $\mathcal{D^\star}$ is the mirror of a given 
link $\mathcal{D}$. Next section fixes the situation.

\section{A numerical invariant and its comparison with the Jones invariant}

To make our invariant independent of canonical forms associated to Groebner bases,
we specialize further the definition of $\lceil \mathcal{D} \rceil$. Take the ring
$\mathcal{R}$ to be $\mathbb{Z}[e^{(2 \pi I / 5}]$, by defining $s:=e^{2 \pi I / 5}$ and
$S:=e^{8 \pi I / 5}$. Note that with this assignment the three generators of
$\mathcal{I}$ become zero. Under this scope, 
$\lceil \mathcal{D} \rceil \in \mathbb{Z}[e^{2 \pi I / 5}]$, for any link diagram $\mathcal{D}$
and we can drop all hats of the previous section. In practice this specialization
does not weaken the invariant: the number of 3679 distinct values of the invariant 
for the 2978 pairs of knots up to 12 crossings is maintained,
with and without specializations.

\begin{proposition}
An element $z \in \mathbb{Z}[\exp(2 \pi I / 5]$ can be uniquely written as 
$ z = n\ e^{2 \pi I / 5} + o\  e^{4 \pi I / 5} + p\  e^{6 \pi I / 5}
+ q\ e^{8 \pi I / 5}$, for integers $n,o,p,q$.
\end{proposition}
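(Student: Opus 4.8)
The plan is to recognize the ring $\mathbb{Z}[e^{2\pi I/5}]$ as $\mathbb{Z}[\zeta]$, where $\zeta := e^{2\pi I/5}$ is a primitive fifth root of unity, and to observe that the four exponentials appearing in the statement are exactly the powers $\zeta, \zeta^2, \zeta^3, \zeta^4$. Thus the proposition asserts precisely that these four powers form a $\mathbb{Z}$-basis of $\mathbb{Z}[\zeta]$. The single algebraic fact driving the whole argument is the vanishing of the fifth cyclotomic polynomial, $\Phi_5(\zeta) = 1 + \zeta + \zeta^2 + \zeta^3 + \zeta^4 = 0$, together with its irreducibility over $\mathbb{Q}$; I would isolate this relation at the outset and rewrite it as $1 = -(\zeta + \zeta^2 + \zeta^3 + \zeta^4)$.

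For existence I would argue as follows. Every element of $\mathbb{Z}[\zeta]$ is by definition an integer combination of nonnegative powers of $\zeta$, and $\zeta^5 = 1$ collapses all such powers onto the five elements $1, \zeta, \zeta^2, \zeta^3, \zeta^4$. Hence a general $z$ can be written as $z = a_0 + a_1\zeta + a_2\zeta^2 + a_3\zeta^3 + a_4\zeta^4$ with $a_i \in \mathbb{Z}$. I would then substitute the rewritten relation for the constant term $a_0$, obtaining $z = (a_1 - a_0)\zeta + (a_2 - a_0)\zeta^2 + (a_3 - a_0)\zeta^3 + (a_4 - a_0)\zeta^4$; setting $n = a_1 - a_0$, $o = a_2 - a_0$, $p = a_3 - a_0$, $q = a_4 - a_0$ yields the required representation.

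For uniqueness the cleanest route is to suppose $n\zeta + o\zeta^2 + p\zeta^3 + q\zeta^4 = 0$ with integer coefficients and to consider the polynomial $g(x) = nx + ox^2 + px^3 + qx^4 \in \mathbb{Z}[x]$, which has $\zeta$ as a root. Since the minimal polynomial of $\zeta$ over $\mathbb{Q}$ is $\Phi_5$, of degree $4 = \deg g$, the polynomial $g$ must be a rational multiple of $\Phi_5$. Comparing constant terms, $g$ has constant term $0$ whereas $\Phi_5$ has constant term $1$, forcing the scalar to vanish and hence $n = o = p = q = 0$. By linearity this gives uniqueness of the representation.

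The only point needing care, though it is entirely standard, is the appeal to the irreducibility of $\Phi_5$ over $\mathbb{Q}$ (equivalently $[\mathbb{Q}(\zeta):\mathbb{Q}] = 4$), which simultaneously guarantees that four powers suffice and that no nontrivial integer relation among $\zeta, \zeta^2, \zeta^3, \zeta^4$ survives. As an alternative to the minimal-polynomial argument, I could phrase uniqueness as a determinant check: the transition matrix from the familiar basis $\{1, \zeta, \zeta^2, \zeta^3\}$ to the set $\{\zeta, \zeta^2, \zeta^3, \zeta^4\}$ is unimodular, its determinant being $\pm 1$ (a direct expansion gives $1$), so the new set is automatically a $\mathbb{Z}$-basis and both existence and uniqueness follow at once. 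Either formulation is routine once the cyclotomic relation is in hand.
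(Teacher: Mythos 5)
Your proof is correct, and its existence half is exactly the paper's argument: write a generic element of $\mathbb{Z}[\zeta]$ as $a_0 + a_1\zeta + a_2\zeta^2 + a_3\zeta^3 + a_4\zeta^4$ and absorb the constant term via $1 = -(\zeta + \zeta^2 + \zeta^3 + \zeta^4)$, shifting every coefficient by $-a_0$; this is word-for-word the paper's substitution $n = n'-m$, $o = o'-m$, $p = p'-m$, $q = q'-m$. Where you go beyond the paper is uniqueness: the paper's proof ends after that coefficient shift with ``the proposition follows'' and never addresses why the representation is unique, which is arguably the only nontrivial content of the statement, since it amounts to the linear independence of $\zeta, \zeta^2, \zeta^3, \zeta^4$ over $\mathbb{Q}$. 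Your minimal-polynomial argument (a vanishing integer combination yields a polynomial of degree at most $4$ with root $\zeta$, hence a rational multiple of $\Phi_5$, and the zero constant term forces that multiple to be zero) supplies exactly the missing half; your alternative observation that the transition matrix from $\{1,\zeta,\zeta^2,\zeta^3\}$ to $\{\zeta,\zeta^2,\zeta^3,\zeta^4\}$ is unimodular (determinant $1$, as you can check by expanding along the column coming from $\zeta^4 = -1-\zeta-\zeta^2-\zeta^3$) is an equally clean way to get existence and uniqueness simultaneously. So your write-up is not merely a different route: it is a completion of the paper's proof, which on its own establishes only existence.
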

\begin{proof}
Let a generic element be
$ z = m + n'\ e^{2 \pi I / 5} + o'\  e^{4 \pi I / 5} + p'\  e^{6 \pi I / 5} + q'\ e^{8 \pi I / 5}$.
Then, defining $n=n'-m,\ o=o'-m,\ p=p'-m,\ q=q'-m$ the proposition follows.
\end{proof}

\begin{definition}
The {\em canonical form} of $ z = n\ e^{2 \pi I / 5} + o\  e^{4 \pi I / 5} + p\  e^{6 \pi I / 5}
+ q\ e^{8 \pi I / 5}$ is denoted by $ \lfloor n,o,p,q \rfloor $.
\end{definition}

\begin{proposition}
Let the canonical form of $\lceil \mathcal{D} \rceil$ be $\lfloor n,o,p,q\rfloor$.
If $ \mathcal{D}$ and its mirror image
are ambient isotopic, then $q=n$ and $p=o$.
\end{proposition}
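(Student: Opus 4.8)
The plan is to reduce the statement to the assertion that $\lceil\mathcal{D}\rceil$ is a \emph{real} number whenever $\mathcal{D}$ is isotopic to its mirror, and to obtain reality by identifying mirroring with complex conjugation on $\mathbb{Z}[e^{2\pi I/5}]$. First I would record how conjugation acts on a canonical form. Since $\overline{e^{2k\pi I/5}}=e^{2(5-k)\pi I/5}$, conjugating $z=n\,e^{2\pi I/5}+o\,e^{4\pi I/5}+p\,e^{6\pi I/5}+q\,e^{8\pi I/5}$ gives $\overline{z}=q\,e^{2\pi I/5}+p\,e^{4\pi I/5}+o\,e^{6\pi I/5}+n\,e^{8\pi I/5}$, i.e. $\overline{\lfloor n,o,p,q\rfloor}=\lfloor q,p,o,n\rfloor$. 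By the uniqueness of the canonical form, $z=\overline{z}$ holds if and only if $n=q$ and $o=p$, which is exactly the desired conclusion. Hence it suffices to prove that $\mathcal{D}\cong\mathcal{D}^\star$ forces $\lceil\mathcal{D}\rceil=\overline{\lceil\mathcal{D}\rceil}$.

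The heart of the argument is the universal identity $\lceil\mathcal{D}^\star\rceil=\overline{\lceil\mathcal{D}\rceil}$, valid for every diagram. I would realize the mirror $\mathcal{D}^\star$ by switching over/under at every crossing; this leaves the underlying $4$-regular plane graph, its faces, and hence the set of states untouched, and it leaves each distinguished angle and the counterclockwise order $F_0^\chi,\dots,F_3^\chi$ unchanged, so every string $\sigma_\chi$ and every index $|\sigma_\chi|$ is preserved. The only effect is that each crossing reverses sign, replacing $X_{|\sigma_\chi|}$ by $x_{|\sigma_\chi|}$ and conversely; thus $[\mathcal{D}^\star]$ is obtained from $[\mathcal{D}]$ by the formal substitution $x_i\leftrightarrow X_i$ (leaving $f_0,f_1$ fixed). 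Now under the numerical specialization $s=e^{2\pi I/5}$, $S=e^{8\pi I/5}=\overline{s}$ of Proposition~\ref{prop:assig}, the automorphism $s\leftrightarrow S$ \emph{is} complex conjugation; and inspecting the assignment shows that $s\leftrightarrow S$ sends $x_i\mapsto X_i$ and $X_i\mapsto x_i$ for every relevant index while fixing $f_0=s^2S^2$ and $f_1=s^4+S^4$. Therefore $[\mathcal{D}^\star]=\overline{[\mathcal{D}]}$.

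It remains to propagate this through the writhe normalization, and this bookkeeping is the step I expect to require the most care. Mirroring reverses the sign of every crossing, so $wr(\mathcal{D}^\star)=-wr(\mathcal{D})$ and every positive curl is exchanged with a negative one. By Lemma~\ref{lem:moves1} the positive and negative curls contribute the factors $sS^4$ and $s^4S$, which are roots of unity that are simultaneously complex conjugate and mutually inverse, $\overline{sS^4}=s^4S=(sS^4)^{-1}$. Thus conjugation interchanges the two normalizing factors in exactly the way that mirroring interchanges the two kinds of curls and reverses the writhe, so the normalization is conjugation-equivariant and $\overline{\lceil\mathcal{D}\rceil}$ carries the factor appropriate to $wr(\mathcal{D}^\star)$; combined with $[\mathcal{D}^\star]=\overline{[\mathcal{D}]}$ this yields $\lceil\mathcal{D}^\star\rceil=\overline{\lceil\mathcal{D}\rceil}$. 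Finally, if $\mathcal{D}$ and $\mathcal{D}^\star$ are ambient isotopic, Theorem~\ref{theo:fullinvar} gives $\lceil\mathcal{D}\rceil=\lceil\mathcal{D}^\star\rceil$, whence $\lceil\mathcal{D}\rceil=\overline{\lceil\mathcal{D}\rceil}$ is real and the first paragraph delivers $q=n$ and $p=o$. The main obstacle is precisely this writhe matching: one must verify that the sign-split definition of $\lceil\cdot\rceil$ assembles into a genuinely conjugation-equivariant normalization, so that passing to the mirror sends the factor used for $\mathcal{D}$ to the factor used for $\mathcal{D}^\star$ rather than to its inverse; everything else is the routine transport of a root-of-unity identity through the state sum.
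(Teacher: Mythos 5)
Your proof is correct and takes essentially the same route as the paper's: the paper's own one-line argument is precisely the observation that mirroring corresponds to interchanging $s$ and $S$, hence to complex conjugation, so an amphichiral diagram has a self-conjugate and therefore palindromic canonical form ($q=n$, $p=o$). Your fuller treatment---verifying that the assignment of Proposition \ref{prop:assig} intertwines the swap $x_i \leftrightarrow X_i$ with conjugation, and that the writhe normalization is conjugation-equivariant (which does hold, provided $\lceil \mathcal{D} \rceil$ is read as the uniform normalization $(sS^4)^{wr(\mathcal{D})}\left[\mathcal{D}\right]$, the reading already forced by the paper's invariance theorem for moves of type 1)---simply makes explicit the steps the paper leaves implicit.
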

\begin{proof}
 It follows from the fact that the invariant of the mirror of a link diagram is
obtained by interchanging $s$ and $S$, therefore, by taking the conjugate of the
complex number. In terms of the canonical form, by taking the reverse or the quadruple.
\end{proof}

\subsection{Table of discrepances between $\lceil \mathcal{D}\rceil$ and
\text{\bf{Jones$(\mathcal{D})$}} for knots up to 12 crossings}

Except by our implementation in Mathematica (\cite{wolfram2012mathematica}) for
obtaining the canonical form of $\lceil \mathcal{D}\rceil$, the data for the
table below is taken from \cite{bar2005knot} and from \cite{cha2009knotinfo}. 
The 15 entries in boldface corresponds to the knots $\mathcal{D}$ distinguished by $\lceil \mathcal{D}\rceil$
and not by the Jones$(\mathcal{D})$. In the other 6 entries the reverse occurs.
For all other 2957 (including the unknot) knots up to 12 crossings 
the behavior of the two invariants coincide: both fail or both succeed.

\begin{center}
$
\begin{array}{rrrr}

& & &\text{\bf{Canonical form}}\\

\text{\bf{Rank}} &\text{\bf{$\mathcal{D}$-name}} & 
\text{\bf{Jones$(\mathcal{D})$}}&\text{\bf form of $\lceil \mathcal{D}\rceil$}\\

4&5_1&  -q^7+q^6-q^5+q^4+q^2& \lfloor -4, -1, -1, -4\rfloor \\

78&\bf{9_{42}} & q^3+\frac{1}{q^3}-q^2-\frac{1}{q^2}+q+\frac{1}{q}-1&\lfloor4, 0, 3, 3\rfloor\\

209&10_{124}& -q^{10}+q^6+q^4&\lfloor -4, -1, -1, -4\rfloor \\

224&10_{139} & -q^{12}+q^{11}-q^{10}+q^9-q^8+q^6+q^4&\lfloor -9, -6, -6, -9 \rfloor \\

237&10_{152} & q^{13} -2 q^{12}+2 q^{11}-3 q^{10}+2 q^9-2 
q^8+q^7+q^6+q^4&\lfloor-14, -6, -6, -14\rfloor \\

636&\bf{K11n19} & q^2-q+1-\frac{1}{q}+\frac{1}{q^2}&\lfloor2, -4, 2, 0\rfloor\\

641&\bf{K11n24} & -q^4-\frac{1}{q^4}+2 q^3+\frac{2}{q^3}-3
   q^2-\frac{3}{q^2}+4 q+\frac{4}{q}-3&\lfloor14, 0, 8, 8\rfloor\\

699&\bf{K11n82} & -q^4-\frac{1}{q^4}+2 q^3+\frac{2}{q^3}-2 q^2-\frac{2}{q^2}+3
   q+\frac{3}{q}-3&\lfloor9, 0, 8, 8\rfloor\\

756&K11n139 & q^8-q^7+q^6-2 q^5+q^4-q^3+q^2+1&\lfloor-4, -1, -1, -4\rfloor\\

1471&\bf{K12a0669} & -q^6-\frac{1}{q^6}+2 q^5+\frac{2}{q^5}-4 q^4-\frac{4}{q^4}+6
   q^3+\frac{6}{q^3}-7 q^2-\frac{7}{q^2}+9 q+\frac{9}{q}-9&\lfloor0, -13, 6, -13\rfloor\\

1973&\bf{K12a1171} & -q^6-\frac{1}{q^6}+3 q^5+\frac{3}{q^5}-6 q^4-\frac{6}{q^4}+10
   q^3+\frac{10}{q^3}-13 q^2-\frac{13}{q^2}+16 q+\frac{16}{q}-17&\lfloor 0, -28, 16, -28\rfloor\\

1981&\bf{K12a1179}& -q^6-\frac{1}{q^6}+3 q^5+\frac{3}{q^5}-5 q^4-\frac{5}{q^4}+8
   q^3+\frac{8}{q^3}-10 q^2-\frac{10}{q^2}+12 q+\frac{12}{q}-13&\lfloor0, -18, 11, -18\rfloor\\

2007&\bf{K12a1205} & -q^6-\frac{1}{q^6}+3 q^5+\frac{3}{q^5}-6 q^4-\frac{6}{q^4}+10
   q^3+\frac{10}{q^3}-14 q^2-\frac{14}{q^2}+17 q+\frac{17}{q}-17&\lfloor 0, -28, 21, -28\rfloor\\

2239&K12n0149 & -q^{12}+q^{11}-q^{10}+q^9-q^7+2 q^6-2 q^5+2 q^4-q^3+q^2&\lfloor -14, -6, -6, -14\rfloor\\

2340&\bf{K12n0250} & -q^5-\frac{1}{q^5}+2 q^4+\frac{2}{q^4}-3 q^3-\frac{3}{q^3}+5
   q^2+\frac{5}{q^2}-6 q-\frac{6}{q}+7&\lfloor 12, -9, 12, 0\rfloor\\

2368&\bf{K12n0278} & -q^5-\frac{1}{q^5}+3 q^4+\frac{3}{q^4}-5 q^3-\frac{5}{q^3}+8
   q^2+\frac{8}{q^2}-10 q-\frac{10}{q}+11&\lfloor 22, -14, 22, 0\rfloor\\

2452&\bf{K12n0362} & -q^4-\frac{1}{q^4}+3 q^3+\frac{3}{q^3}-4 q^2-\frac{4}{q^2}+5
   q+\frac{5}{q}-5 &\lfloor 0, -13, 6, -13\rfloor\\

2580&\bf{K12n0490} & -q^5-\frac{1}{q^5}+3 q^4+\frac{3}{q^4}-6 q^3-\frac{6}{q^3}+10
   q^2+\frac{10}{q^2}-12 q-\frac{12}{q}+13 &\lfloor 27,-19,27,0\rfloor \\

2676&\bf{K12n0586} & -q^5-\frac{1}{q^5}+4 q^4+\frac{4}{q^4}-8
   q^3-\frac{8}{q^3}+13 q^2+\frac{13}{q^2}-16 q-\frac{16}{q}+17 & \lfloor 37, -24, 37, 0 \rfloor\\

2700&\bf{K12n0610} & -q^5-\frac{1}{q^5}+3 q^4+\frac{3}{q^4}-5 q^3-\frac{5}{q^3}+8
   q^2+\frac{8}{q^2}-10 q-\frac{10}{q}+11 &\lfloor 22, -14, 22, 0\rfloor\\

2785&\bf{K12n0695} & -q^5-\frac{1}{q^5}+3 q^4+\frac{3}{q^4}-7 q^3-\frac{7}{q^3}+11
   q^2+\frac{11}{q^2}-13 q-\frac{13}{q}+15 &\lfloor 32, -19, 32, 0\rfloor\\

\end{array}
$
\end{center}

The {\bf $\mathcal{D}$-name} of each knot up to 10 crossings is from Rolfsen's table \cite{rolfsen2003knots}.
For 11 and 12 crossings we use the DT-name of KnotInfo \cite{cha2009knotinfo}. 
{\bf Rank} is the {\em name rank}
of this same reference, starting with rank 1 for the unknot. 
Because of the simplicity of getting the expression of both invariants for $\mathcal{D}^\star$ from
the invariants for $\mathcal{D}$,
each entry is in fact a pair of mirror knots.
However, the order of the pairs for the two invariants may not be the same.

\section{Conclusion}
We have defined a new ambient isotopy for links based on a state sum on the faces of
the link diagram. The number of valid states is substantially smaller when compared with
the $2^n$ states for Jones invariant. By the simplicity of its definition and the fact that
it is not implied by the Jones Polynomial not even by the Kauffman 2-variable polynomial, this new
invariant deserves to be further investigated. A particularly wellcome behavior that it possess
is the following: it distinguishes each pair of knots with the same 
Jones polynomial and which have distinct minimum crossing number. This is an empirical fact
which holds in  the realm of knots up to 12 crossings. Another important aspect
is the possibility of using recursion and parallellism in its computation. We believe that links
up to 100 crossings will be amenable to treat under a proper recursive/parallel implementation.


\bibliographystyle{plain}
\bibliography{bibtexIndex.bib}

\end{document}